\documentclass[11pt,letterpaper]{article}

\usepackage[margin=1in]{geometry}
\usepackage[T1]{fontenc}
\usepackage{microtype}

\usepackage{mathtools,amssymb,amsthm}
\usepackage{newtxtext,newtxmath}

\newtheorem{theorem}{Theorem}[section]
\newtheorem{proposition}[theorem]{Proposition}

\newtheorem{corollary}[theorem]{Corollary}

\theoremstyle{definition}

\theoremstyle{remark}

\usepackage{graphicx}
\usepackage{booktabs}
\usepackage{tabularx}
\usepackage{enumitem}
\usepackage{xcolor}
\usepackage{svg}
\usepackage{tikz}
\usepackage[labelsep=period]{caption}
\usetikzlibrary{intersections, calc}

\setlist{
  itemsep=0.25em,
  topsep=0.4em
}

\usepackage[numbers,sort&compress]{natbib}

\usepackage{xcolor}
\definecolor{linkblue}{RGB}{25,65,120}

\usepackage[
  pdfusetitle,
  colorlinks=true,
  linkcolor=linkblue,
  citecolor=linkblue,
  urlcolor=linkblue
]{hyperref}

\title{\textbf{In Search of Melchior's Ordinary Points}}

\author{
  Jonathan Lenchner\\
  \small IBM Research\\
  \small Yorktown Heights\\
  \small \texttt{jon.lenchner@gmail.com}
  \and
  Rik Sengupta\\
  \small IBM Research\\
  \small Cambridge, MA\\
  \small \texttt{rik@ibm.com}
}

\date{August 2026}

\begin{document}

\maketitle

\begin{abstract}
In 1893, James Joseph Sylvester posed the following problem: given $n$ points in the plane, not all collinear, must there be a line determined by two of the points that does not pass through any of the other points? In 1940, Eberhard Melchior studied the equivalent dual problem in the projective plane: given a set of $n$ lines in the (real) projective plane, not all passing through a common point, must there be a point where exactly two of the lines intersect? Such a point of intersection is called an \emph{ordinary point}. Via a clever double-counting argument, Melchior found that in fact there must be at least three such points. Given the many simple ``visual'' proofs of what is today known as the Sylvester--Gallai Theorem---the theorem that states there must be at least one ordinary point---a natural question is whether there is a simple visual proof that recovers all three of Melchior’s ordinary points. This paper provides such a proof. 
\end{abstract}

\noindent\textbf{Keywords:}
Sylvester--Gallai Theorem; ordinary points; ordinary lines; point-line duality.

\medskip

\noindent\textit{Preprint version of an article in \emph{The American Mathematical Monthly}, pp 1-8, July 2026 \cite{mainarticle}.}

\section{Introduction.}

In 1893, James Joseph Sylvester posed the following problem \cite{sylv93}: given a set of $n$ points in the Euclidean plane, not all of which are collinear, must there be a line determined by (i.e., passing through) two of the points that does not pass through any additional point in the set? The first recognized solution to Sylvester's problem, answering the question in the affirmative, was due to Tibor Gallai in 1944 \cite{gallai44}. However, after Gallai's solution was published, it was realized that a solution to Sylvester's problem had appeared earlier, in 1940, in the work of Eberhard Melchior \cite{melch41}. Melchior had studied the dual problem: given a set of $n$ lines in the real projective plane, not all intersecting in a common point, must there be a point of intersection of exactly two of the lines?  By projective duality (see the discussion at the beginning of Section \ref{sec:prelims}), the two problems are equivalent. 
In this paper we confine our attention entirely to the dual setting.
In this setting, a point of intersection of exactly two lines is called an \textit{ordinary point}.
The affirmative conclusion to the problem, either in the primal or dual setting, is usually referred to as the Sylvester--Gallai Theorem. Today, many proofs of the Sylvester--Gallai Theorem are known \cite{pfs_fr_the_book:2018, borwein90, bmp2006}. However, a novel feature of Melchior's proof is that, via a clever double-counting argument, it actually establishes that there must be at least \textit{three} ordinary points. 

Given the many simple ``visual'' (in other words, essentially algorithmic)
proofs of the Sylvester--Gallai Theorem that are now known, a natural question to ask is whether there is an explicit constructive argument that, given a set of $n$ lines, directly finds three ordinary points, without appealing implicitly to Melchior's double-counting argument and then testing all intersection points. This paper provides such an argument.

\section{History and Related Work.}\label{sec:history}

After Sylvester posed his problem in 1893 \cite{sylv93}, 
Paul Erd\H{o}s re-posed the problem in the \textit{American Mathematical Monthly} in 1943 \cite{erdos43}, and it was answered in the affirmative by Tibor Gallai in an issue of the \textit{Monthly} that appeared the next year \cite{gallai44}.
It is not entirely clear when or by whom Melchior's 1940 proof \cite{melch41} was first noticed. 
Melchior's double-counting argument using Euler's relation is now 
commonly encountered in the study of line arrangements in computational geometry \cite{cg-aa:2008}. In the decades following Melchior, a substantial body of work has accumulated on the minimum possible number of ordinary points (respectively, lines) as a function of the number, $n$, of lines (respectively, points), in an arrangement of not-all-concurrent lines (respectively, a set of not-all-collinear points). 
In 1958, Leroy Kelly and William Moser showed that there are at least $\frac{3n}{7}$ ordinary points \cite{km58}. Then, in 1993, Joseph Csima and Eric Sawyer showed that for $n > 7$, there are at least $\frac{6n}{13}$ ordinary points \cite{cs93}. In 2013, Ben Green and Terence Tao showed that for sufficiently large $n$, there must be at least $\frac{n}{2}$ ordinary points \cite{gt13}. 
All three results use Melchior's argument as a starting point.


In this paper, we analyze a relatively recent and simple proof of the Sylvester--Gallai Theorem \cite{lench08}, and show that it can be adapted to extract all three of Melchior's ordinary points (the minimum number of ordinary points, independent of the number of lines), without using Euler's relation or a counting argument.

\section{Preliminaries.} \label{sec:prelims}
Since Melchior's argument takes place in the real projective plane, we start with a quick overview of the geometry of this plane. 
In the real projective plane, $\mathbb{RP}^2$, every two distinct lines intersect in a unique point and, just like in the Euclidean plane, every two distinct points determine a unique line. One way to construct a model of the projective plane is to start with the Euclidean plane, add ``points at infinity'' corresponding to the possible slopes of lines, and then collect all the points at infinity together at a new ``line at infinity''. A line of slope $\sigma$, with $\sigma \in \mathbb{R} \cup \{\infty\}$, then passes through the point at infinity associated with slope $\sigma$. The \emph{principle of duality} in the real projective plane says that, for any theorem in this geometry that talks solely about incidences of points and lines, there exists a corresponding dual theorem derived by interchanging the roles of points and lines. This means that if one 
has a statement about points, lines, and their incidence relationships, one can swap ``point'' for ``line'' and vice versa, and the resulting statement will also be a valid theorem. Therefore, we can start with the $n$ points from Sylvester's problem, and transfer them to the projective plane and dualize the points to lines, to get an equivalent dual problem. In addition to the principle of duality, we later utilize an important family of mappings of the real projective plane known as projective transformations. These are the incidence-preserving bijections of the projective plane onto itself. 
There is always a projective transformation that takes an arbitrary given line to the line at infinity. 
For a more thorough treatment of projective geometry, we refer the reader to \cite{bennett:1995}.

Given a finite set of lines in the projective plane, not all passing through a common point, one can consider the resulting partition of the plane into vertices, edges, and faces induced by the lines---what is referred to as the \textit{line arrangement} induced by the set of lines. If we let $\mathcal{V}$, $\mathcal{E}$, and $\mathcal{F}$ denote the set of vertices, edges, and faces in this partition respectively, and define $V := |\mathcal{V}|$, $E := |\mathcal{E}|$, and $F := |\mathcal{F}|$, then Euler's relation in the projective plane \cite[Prop. 5.3]{felsner2004} 
says that we have:
\begin{equation} \label{euler}
    V - E + F = 1.
\end{equation}

Given an arrangement of lines in the real projective plane, and a subset $\mathcal{S}$ of these lines, by a \textit{region} associated with $\mathcal{S}$, we mean a closed, connected subset of $\mathbb{RP}^2$ whose boundary consists of segments lying in the lines of $\mathcal{S}$. A \emph{triangular region} is a region bounded by the line segments of three lines. For example, in Figure \ref{fig:3_line_arrangement} below, the three lines $\ell_1, \ell_2$, and $\ell_3$ partition the projective plane into four triangular regions, respectively denoted $T_1, T_2, T_3$, and $T_4$. 
\begin{figure*}[h]
\begin{center}
\begin{tikzpicture}[scale=1.5, every node/.style={font=\small}]
  \coordinate (v0) at (0,1.2);
  \coordinate (v1) at (-1,0);
  \coordinate (v3) at (1.1,0);

  \draw[thick] (2.1,0) -- (-1.9,0) node[left] {$\ell_1$};

  \draw[thick] (v0) -- (v1);
  \draw[thick] (v0) -- (v3);

  \draw[thick] (v1) -- ($(v0)!1.7!(v1)$) node[below] {$\ell_2$};
  \draw[thick] (v3) -- ($(v0)!1.7!(v3)$) node[below] {$\ell_3$};
  \draw[thick] (v0) -- ($(v1)!1.7!(v0)$);
  \draw[thick] (v0) -- ($(v3)!1.7!(v0)$);



  \node at (-0.9,0.8) {\large $T_1$};
  \node at (0,0.4) {\large $T_2$};
  \node at (0.9,0.8) {\large $T_3$};
  \node at (0,-0.4) {\large $T_4$};

  \node at (0,1.8) {\textcolor{lightgray}{$T_4$}};
  \node at (-1.6,-0.3) {\textcolor{lightgray}{$T_3$}};
  \node at (1.8,-0.3) {\textcolor{lightgray}{$T_1$}};
  
\end{tikzpicture}
\end{center}
\caption{There exist three lines, $\ell_1, \ell_2$ and $\ell_3$, of the arrangement which do not intersect in a common point. These lines partition $\mathbb{RP}^2$ into four triangular regions.}
\label{fig:3_line_arrangement}
\end{figure*}
The \emph{defining vertices} of such a region are the pairwise intersection points of the three lines; the \emph{defining edges} are the segments of the lines forming the boundary of the region. We are depicting just three of possibly many lines in the arrangement, so there can be many lines passing into each of these regions, and the regions will not, in general, simply be faces of the arrangement. Furthermore, if none of the edges of the faces  contained in a region pass through, or lie on, the line at infinity, we call the region \emph{finite}. In an arrangement of lines where not all lines pass through a common point, we can pick any particular triangular region, apply a projective transformation taking a line not intersecting the region (including its boundary) to the line at infinity, and thereby make the region finite.

\section{Melchior's Proof.}\label{sec:melchior}

Given a set of $n$ lines in the projective plane, not all passing through a common point (and hence $n \geq 3$), Melchior considered the induced line arrangement. 
To understand his argument, we introduce the following notation:
\begin{eqnarray*}
    N_k &=& \textrm{number of vertices where exactly $k$ lines cross,} \\
    M_s &=& \textrm{number of faces bounded by exactly $s$ edges}.
\end{eqnarray*}
We now make a series of observations. First, note that:
\begin{equation} \label{r1}
   \sum_{k=2}^n N_k = V.
\end{equation}
Next, consider a face, $f$, with exactly two edges, i.e., a face counted in $M_2$, and say the lines giving rise to these edges are $\ell_1$ and $\ell_2$. The vertex $v = \ell_1 \cap \ell_2$ is then a defining vertex of $f$. Since not all lines in the arrangement pass through the same point, this means that at least one other line of the arrangement intersects $\ell_1$ and $\ell_2$ at  vertices other than $v$; consider a closest such vertex to $v$ along $\ell_1$ or $\ell_2$ and call this vertex $v'$. Then, $v'$ would be a defining vertex of $f$, and some line (distinct from $\ell_1$ and $\ell_2$) passing through $v'$ would give rise to an edge of $f$ that is different from the edges of $f$ along $\ell_1$ and $\ell_2$. This contradicts $f$ having just two edges. 
A two-edged face therefore cannot exist, and so $M_2 = 0$. Hence:
\begin{equation} \label{r2}
   \sum_{s=3}^n M_s = F. 
\end{equation}
Next, recall that the \emph{degree} of a vertex $v$, denoted $\deg(v)$, is the number of edges incident to it; each line passing through a vertex $v$, in general, contributes $2$ to its degree. The only way the degree of a vertex could have a contribution from only one edge along a given line is if \emph{all} lines in the collection passed through this same vertex, which, again, is prohibited by assumption. It follows that all lines contain at least two vertices. We will now compute the sum of the degrees of all vertices in two different ways. On the one hand, every vertex counted in $N_k$ has exactly $k$ lines passing through it, and so it has degree $2k$. Further, every edge contributes twice to $\sum_{v \in \mathcal{V}}\textrm{deg}(v)$, once at each of its endpoints. Hence, we have:
\begin{equation} \label{r3}
    2E = \sum_{v \in \mathcal{V}}\textrm{deg}(v) = \sum_{k=2}^n 2kN_k.
\end{equation}
We can analogously count the number of edges bounding each face. For any face $f$, define $\textrm{deg}(f)$ as the number of edges bounding $f$. Any face counted in $M_s$ has exactly $s$ edges bounding it. Since every edge is shared by exactly two faces, we obtain:
\begin{equation} \label{r4}
    2E = \sum_{f \in \mathcal{F}}\textrm{deg}(f) = \sum_{s=3}^n sM_s.
\end{equation}
Melchior then considered the quantity $Y$, defined as:
\begin{equation} \label{melch-y}
    Y := \sum_{k=2}^n(3-k)N_k + \sum_{s=3}^n(3-s)M_s.
\end{equation}
Breaking the right hand side of (\ref{melch-y}) into four pieces, and substituting in (\ref{r1})-(\ref{r4}), we obtain:
\begin{eqnarray}
        Y &=& 3\sum_{k=2}^nN_k - \sum_{k=2}^nkN_k + 3\sum_{s=3}^nM_s - \sum_{s=3}^nsM_s \notag \\
        &=& 3V - E + 3F - 2E \notag\\ 
        &=& 3(V - E + F) \notag\\
        &=& 3. \label{p1}
\end{eqnarray}
The last equality follows from Euler's relation (\ref{euler}).

But now, we can go back and rewrite Melchior's expression (\ref{melch-y}) as follows:
\begin{align} \label{melch-y-new}
    Y &= \sum_{k=2}^n(3-k)N_k + \sum_{s=3}^n(3-s)M_s \nonumber \\
    3 &= N_2 + \sum_{k=3}^n(3-k)N_k + \sum_{s=3}^n(3-s)M_s.
\end{align}
Since all terms in the two sums on the right hand side of (\ref{melch-y-new}) are nonpositive, we conclude that $N_2$, the number of ordinary points, is at least $3$. \qed

%

\section{Finding Melchior's Three Ordinary Points.} \label{sec:one_op}

In this section, we present our main result. A sketch of the argument is as follows: given a collection of $n$ lines, we start by considering a triangular region determined by some three of the lines, with a fourth line passing through a defining vertex (formed by the intersection of two of the three lines) into the interior of the region, and intersecting the third line in a vertex $v$.
We will inductively show that either $v$ is an ordinary point, or there is a triangular region properly contained in the initial triangular region, with a line passing through one of its defining vertices, creating an ordinary point at its intersection with the opposite edge of that triangular region. From this it will follow that the original triangular region contains an ordinary point that is not one of its defining vertices. Looking at vertices on either side of this first ordinary point along a given line, we will be able to recover two additional ordinary points using the same inductive technique.

Start with any three lines from the arrangement, say $\ell_1, \ell_2$, and $\ell_3$, which do not meet in a common point. Three such lines must exist by the assumption that not all lines meet in a common point. 
These three lines partition the projective plane into four triangular regions; we have labeled these regions $T_1, T_2, T_3$, and $T_4$ in Figure \ref{fig:3_line_arrangement}. Note that three of the regions ``wrap around the line at infinity'' as we have indicated. 

If the three points of intersection of the three lines are all ordinary, we have found our three ordinary points, so assume one of the intersection points, say $v_1 = \ell_2 \cap \ell_3$, is not ordinary. There is then an additional line, say $\ell_4$, passing through $v_1$, which passes through the interior of two of the four triangular regions. 
Consider either of these two triangular regions and apply a projective transformation that maps any chosen line disjoint from the triangular region to the line at infinity, thereby 
making the region finite. We will henceforth focus just on this finite triangular region, which we take without loss of generality to be the region $T_2$ in Figure \ref{fig:3_line_arrangement}. The arrangement with the addition of the line $\ell_4$ is depicted in Figure \ref{fig:3_line_arrangement_step1}.

\begin{figure*}[ht]
\begin{center}
\begin{tikzpicture}[scale=1.5, every node/.style={font=\small}]
  \coordinate (v0) at (0,1.2);
  \coordinate (v1) at (-1,0);
  \coordinate (v2) at (-0.2,0);
  \coordinate (v3) at (1.1,0);

  \draw[thick] (1.5,0) -- (-1.5,0) node[left] {$\ell_1$};

  \draw[thick] (v0) -- (v1);
  \draw[thick] (v0) -- (v2);
  \draw[thick] (v0) -- (v3);

  \draw[thick] (v1) -- ($(v0)!1.5!(v1)$) node[below] {$\ell_2$};
  \draw[thick] (v3) -- ($(v0)!1.5!(v3)$) node[below] {$\ell_3$};
  \draw[thick] (v2) -- ($(v0)!1.5!(v2)$);
  \draw[thick] (v0) -- ($(v1)!1.5!(v0)$);
  \draw[thick] (v0) -- ($(v2)!1.5!(v0)$) node[above] {$\ell_4$};
  \draw[thick] (v0) -- ($(v3)!1.5!(v0)$);

  \filldraw[black] (v0) circle (1pt) node[left] {$v_1$};
  \filldraw[black] (v2) circle (1pt) node[below right] {$v_2$};


  \node at (-0.35,0.4) {\large $T_{21}$};
  \node at (0.25,0.4) {\large $T_{22}$};
  \node at (-0.2,-0.9) {\large $T_4$};
  \node at (-0.9,0.8) {\large $T_1$};
  \node at (0.8,0.8) {\large $T_3$};

\end{tikzpicture}
\end{center}
\caption{The additional line, $\ell_4$, through $v_1$,  passes into the finite triangular region $T_2$, subdividing it into smaller triangular regions $T_{21}$ and $T_{22}$.}
\label{fig:3_line_arrangement_step1} 
\end{figure*}

%
%
We now prove by strong induction on $k \geq 0$ that for any triangular region $T$, and line $\ell$ passing through one of the defining vertices of $T$ into the interior of $T$ and meeting the opposite edge of $T$ at a vertex $v$, along with $k$ additional lines passing through the interior of $T$, either (i) $v$ is ordinary, or (ii) $T$ properly contains some other triangular region $T'$ with a line $\ell'$ passing through one of its defining vertices, which meets the opposite edge of $T'$ at an ordinary vertex $v'$.

In short, we have a scenario of the type depicted in Figure \ref{fig:3_line_arrangement_step1}, but with the addition of $k$ undrawn lines that pass through the interior of $T_{21} \cup T_{22}$. 
If $k = 0$, there are no additional lines, and so the vertex $v$ (i.e., $v_2$ in Figure \ref{fig:3_line_arrangement_step1}) 
is ordinary, and we are done. Thus, assume our assertion is true in the case of any number of additional lines up to $k$, and let us prove it for $k + 1$ additional lines. From Figure \ref{fig:3_line_arrangement_step1}, either the vertex $v_2$ is ordinary, in which case we are done, or there is an additional line through $v_2$ passing into the interior of either $T_{21}$ or $T_{22}$. The two cases are symmetric, so assume the additional line passes into $T_{22}$, intersecting the line $\ell_3$ at a vertex $v_3$. Since there were $k + 1$ lines in addition to $\ell_4$ passing into the interior of $T_2 = T_{21} \cup T_{22}$, there are at most $k$ lines in addition to the new line passing into the interior of $T_{22}$, and so we can use the induction hypothesis to conclude that either $v_3$ is ordinary, or $T_{22}$ contains an ordinary point arising from a nested triangular region and additional line. The same conclusion therefore applies to $T_2$, and we have therefore established the following:

\begin{proposition}
\label{prop_special_ord_point}
Let $T$ be a triangular region in a finite line arrangement, and suppose that a line $\ell$ of the arrangement passes through a defining vertex of $T$, into the interior of $T$, and intersects the opposite edge of $T$ at a vertex $v$. Then, either $v$ is ordinary, or $T$ properly contains another triangular region, $T'$, with an additional line $\ell'$ of the arrangement passing through a defining vertex of $T'$, into the interior of $T'$, and intersecting the opposite edge of $T'$ at an ordinary point.
\end{proposition}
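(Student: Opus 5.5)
The plan is strong induction on the number $k$ of lines of the arrangement, other than $\ell$ and the three lines bounding $T$, that pass through the interior of $T$. Before starting, I apply a projective transformation sending a line disjoint from $T$ to the line at infinity, so that $T$ is a finite triangle. Betweenness along segments and ``interior'' then have their usual Euclidean meaning. Label the defining vertex through which $\ell$ enters as $u$, the two lines of $T$ through $u$ as $a$ and $b$, and the opposite side as lying on $c$, so that $v=\ell\cap c$. The line $\ell$ splits $T$ into two triangular regions: $T_a$, bounded by $a,\ell,c$, and $T_b$, bounded by $b,\ell,c$. Both are properly contained in $T$.

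\textbf{Base case and reduction.} If $k=0$, no line other than $\ell$ and $c$ can pass through $v$. Any such line would go through a point of the relative interior of the edge on $c$, with $v\neq u$ and $v$ not on $a$ or $b$. It would therefore enter the interior of $T$, which contradicts $k=0$. So $v$ is ordinary.

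For the inductive step, suppose $v$ is not ordinary, and pick a third line $m$ through $v$. Since $v$ lies in the open edge on $c$, $m$ enters the interior of $T$. It lies on one side of $\ell$ near $v$, say inside $T_b$. Inside the convex triangle $T_b$, $m$ runs from the vertex $v$ (a defining vertex of $T_b$, being $\ell\cap c$) to a point $w$ on the opposite side, which lies on $b$. Now apply the inductive hypothesis to the triangle $T_b$ with the line $m$ and vertex $w$. This gives either that $w$ is ordinary, so we take $T'=T_b$ and $\ell'=m$, or that $T_b$ properly contains a suitable $T'$, which is then also properly contained in $T$.

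\textbf{Counting step.} To use the induction I must check that the number of lines other than $m,\ell,b,c$ meeting the interior of $T_b$ is at most $k-1$. Every line meeting the interior of $T_b$ also meets the interior of $T$. The line $m$ itself is one of the $k$ lines counted for $T$, and $m$ is excluded from the count for $T_b$. The lines $\ell$ and $c$ bound $T_b$, so they do not meet its interior. So the count drops by at least one, and the strong inductive hypothesis applies.

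\textbf{Main obstacle.} The delicate point is making sure $m$ behaves correctly inside $T_b$. I need $m$ to hit the opposite side of $T_b$ at a genuine vertex of the arrangement that is not itself a defining vertex of $T_b$. This is guaranteed because $m$ is distinct from $\ell$ and $c$, so it cannot pass through the other two vertices $\ell\cap b=u$ or $b\cap c$ of $T_b$: each of those already lies on $\ell$ or $c$, and $m$ meets $\ell$ and $c$ only at $v$. Hence $m$ enters the interior and exits through the relative interior of the edge on $b$. Convexity of the finite triangle makes this rigorous, and this is exactly where the preliminary projective normalization is used.
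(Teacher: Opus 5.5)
Your proposal is correct and follows essentially the same route as the paper. Both use strong induction on the number of extra lines crossing the interior of $T$: split $T$ by $\ell$, take a third line through $v$ into one of the two sub-triangles, and apply the hypothesis there with a count that drops by at least one. Your write-up is in fact more careful than the paper's on the base case, on why $m$ exits through the open opposite edge, and on why the count decreases, but the argument is the same.
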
 

Since the ordinary point found in Proposition \ref{prop_special_ord_point} is never one of the defining vertices of the triangular region $T$, we have:

\begin{corollary} \label{prop_adj-triangles}
Let $T$ be a triangular region in a finite line arrangement and suppose a line of the arrangement passes through a defining vertex of $T$ into its interior. Then, $T$ must contain an ordinary point that is not one of its three defining vertices.
\end{corollary}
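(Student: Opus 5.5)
The plan is to apply Proposition \ref{prop_special_ord_point} directly to $T$ and the given line $\ell$, and then check in each of its two alternatives that the ordinary point produced lies in $T$ but is not one of the three defining vertices of $T$.

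\textbf{Setting up.} Let $T$ have defining vertices $a,b,c$. Suppose $\ell$ passes through $a$ into the interior of $T$. As in the setup preceding the proposition, we may apply a projective transformation to make $T$ finite. Then $\ell$, once it enters the interior of $T$ at $a$, must exit $T$ through the opposite edge $bc$, at a point $v$. This point is a vertex of the arrangement, so the hypotheses of Proposition \ref{prop_special_ord_point} are met.

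\textbf{Case (i): $v$ is ordinary.} We must show $v\notin\{a,b,c\}$. First, $v\neq a$, since $\ell$ passes through the interior of $T$ and so meets the closed edge $bc$ in a point other than $a$. Second, $v\neq b$: if $\ell$ met $bc$ at $b$, then $\ell$ would contain both $a$ and $b$. It would therefore coincide with the line through edge $ab$, contradicting that $\ell$ enters the interior of $T$. The same argument gives $v\neq c$. So $v$ lies in the relative interior of edge $bc$, and hence is an ordinary point in $T$ that is not a defining vertex.

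\textbf{Case (ii): the ordinary point comes from a nested region.} Here the proposition gives a triangular region $T'\subsetneq T$, a line $\ell'$ through a defining vertex of $T'$ into its interior, and an ordinary point $v'$ on the opposite edge of $T'$. Applying the Case (i) argument to $T'$ shows that $v'$ lies in the relative interior of an edge of $T'$, so $v'\in T$.

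It remains to exclude $v'\in\{a,b,c\}$. This is the one step needing care. If $v'$ were, say, $a$, then $a$ would lie in the relative interior of an edge of $T'\subseteq T$. But $a$ is a corner of the convex triangle $T$, and a corner of a convex set cannot lie in the relative interior of a segment contained in that set, since it is an extreme point. Hence $v'\notin\{a,b,c\}$.

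Alternatively, one can use the construction in the inductive proof directly. The nested regions are obtained by cutting along lines through vertices on edges, so every ordinary point produced lies on an edge strictly between defining vertices. Either route gives the conclusion, completing the proof of Corollary \ref{prop_adj-triangles}.
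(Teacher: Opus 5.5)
Your proposal is correct and follows the paper's route: the paper obtains the corollary directly from Proposition~\ref{prop_special_ord_point} by noting that the ordinary point it produces is never one of the defining vertices of $T$. Your case analysis spells out that one-line observation. In case (i) you show $v$ lies in the relative interior of the opposite edge, and in case (ii) you use the extreme-point argument for the nested point $v'$.
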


The next proposition will allow us to find the three ordinary points guaranteed by Melchior. 

\begin{proposition} \label{prop-final}
Let $T$ be a triangular region in a finite line arrangement and suppose a line of the arrangement passes through a defining vertex of $T$ into its interior, intersecting the opposite edge of $T$ at the vertex $v$. If $v$ is an ordinary point, then the arrangement contains at least three ordinary points.
\end{proposition}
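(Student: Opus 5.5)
The plan is to extract two further ordinary points from the two sub-triangles of $T$ that lie on either side of $\ell$, using Corollary~\ref{prop_adj-triangles} on each. Let $p$ be the defining vertex of $T$ through which $\ell$ passes, and let $m$ be the line containing the opposite edge $ab$ of $T$, so that $v=\ell\cap m$ lies strictly between $a$ and $b$. By hypothesis $\ell$ and $m$ are the only lines through $v$. The segment $pv\subset\ell$ splits $T$ into the triangular regions $T_a$ (defining vertices $p,a,v$) and $T_b$ (defining vertices $p,v,b$), and I will treat $T_a$; $T_b$ is symmetric.

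Along $m$, let $x$ be the vertex of the arrangement nearest to $v$ on the segment $va$ (possibly $x=a$). If $x$ is ordinary, it is the desired second ordinary point. Otherwise some line $n\neq m$ passes through $x$, and I take the triangular region $R\subseteq T_a$ cut out by $\ell$, $m$ and $n$ on the side of $m$ containing $T$, with defining vertices $v$, $x$ and $\ell\cap n$. This needs a case split.
\begin{itemize}
\item If $n$ meets $\ell$ strictly between $p$ and $v$, then $R\subseteq T_a$ and $R\neq T_a$.
\item If $n$ passes through $p$ (possible only when $x\neq a$), then $R$ shares its $\ell$-edge with $T_a$.
\item The edge $m$ of $T$ itself cannot occur as $n$, and $n$ cannot cross $\ell$ outside the segment $pv$ without leaving $T_a$ through the side $pa$. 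In that case I replace $R$ by the triangle bounded by $\ell$, $m$ and the line of $T$ through $p$ and $a$, namely $T_a$ itself.
\end{itemize}
In every case I obtain a triangular region $R\subseteq T_a$ having $x$ or $a$ as a defining vertex on $m$, adjacent to $v$. Since that vertex is non-ordinary, I need a second line through it that enters the interior of $R$. Here I would use the freedom of choice: among the lines through $x$ other than $m$, pick $n$ to be the one making the triangle $R$ extremal, for instance closest to $m$ at $x$ on the $T$ side. Every other such line then enters the interior of $R$, unless all of them leave $x$ away from $T_a$. In that last case I switch to the mirror triangle on the other side of $m$. Corollary~\ref{prop_adj-triangles} then gives an ordinary point $q_a\in R$ that is not a defining vertex of $R$. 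In particular $q_a\neq v$.

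Doing the same in $T_b$ gives $q_b$, and together with $v$ this yields three ordinary points, provided they are distinct. This is the step I expect to be the main obstacle. Since $T_a\cap T_b$ is the segment $pv$ of $\ell$, the points $q_a$ and $q_b$ could only coincide at a vertex lying on $pv$. I would exclude this by tracking where Proposition~\ref{prop_special_ord_point} actually places its ordinary point: it always lies on the edge opposite the vertex from which the auxiliary line is drawn, and it lies in the relative interior of that edge. By arranging that the auxiliary line in $R$ emanates from the vertex on $\ell$, the opposite edge lies on $m$ or on $n$, strictly away from $\ell$. Choosing which vertex the auxiliary line emanates from is where the extremal choice of $n$ must be made carefully.

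If this bookkeeping becomes unwieldy, the fallback is to apply the same construction in the complementary lune of $\ell\cup m$, where $x$ and its counterpart are taken on the far side of $m$. This also places the two new ordinary points in regions whose interiors are disjoint and which meet $\ell\cup m$ only away from $v$.
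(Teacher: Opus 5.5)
There is a genuine gap, at exactly the step you flag: nothing in your argument rules out $q_a=q_b$.

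Since $T_a\cap T_b$ is the whole segment $pv$ of $\ell$, and Corollary~\ref{prop_adj-triangles} only guarantees a point that is not a defining vertex, both points may be the same ordinary vertex in the open segment $pv$. Your remedy rests on a false claim. Proposition~\ref{prop_special_ord_point} puts its point on the edge opposite the apex of the \emph{last} nested triangle of its recursion, not on an edge of $R$, and those nested triangles can have edges on $\ell$. In fact, in your case~1 the auxiliary line cannot emanate from $y=\ell\cap n$ at all. The open segment $vx$ contains no vertex by the choice of $x$, and the line through $y$ and $v$ is $\ell$ while the line through $y$ and $x$ is $n$. So any further line through $y$ entering the interior of $R$ would have to cross the open segment $vx$. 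The auxiliary line therefore comes from $x$, and the first candidate point lies on the edge $vy\subseteq pv$, precisely in the shared segment.

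The surrounding case analysis has further holes:
\begin{itemize}
\item In case~3, $T_a$ has no guaranteed line through a defining vertex into its interior. Here $x\neq a$ is not a defining vertex, $v$ is ordinary, $a$ may be ordinary, and $p$ need not send a line into $T_a$. So the Corollary does not apply.
\item The ``mirror triangle'' lies in the component of $\mathbb{RP}^2\setminus(\ell\cup m)$ that contains $T_b$, which destroys the separation you want.
\item The fallback only separates interiors, while coincidence can still happen on $\ell\cup m$ away from $v$.
\end{itemize}

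The missing idea is to pair two regions that meet only in common defining vertices, rather than two regions that share an edge as $T_a$ and $T_b$ do. The paper achieves this by working at the endpoints $a,b$ of the opposite edge rather than near $v$.

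Let $A$ and $B$ be the lines $pa$ and $pb$. The four lines $A,B,\ell,m$ cut $\mathbb{RP}^2$ into six triangular faces: two bounded by $A,\ell,m$ (one of them $T_a$), two by $\ell,B,m$, and two by $A,B,m$. The four sectors at $a$ lie in the two $(A,\ell,m)$-faces and the two $(A,B,m)$-faces. Each pair of vertically opposite sectors at $a$ lies in one $(A,\ell,m)$-face and one $(A,B,m)$-face, and these two faces intersect only in $\{p,a\}$. Moreover, $v$ is a defining vertex of the former and does not lie in the latter.

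So if $a$ is not ordinary, an extra line through $a$ enters both faces of such a pair through their common defining vertex $a$. Corollary~\ref{prop_adj-triangles} applied to each face then yields two ordinary points that are distinct from each other and from $v$. If $a$ is ordinary, run the same argument at $b$. If $b$ is ordinary as well, then $a$, $v$, $b$ are the three ordinary points.
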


\begin{proof}
To establish the Proposition, let us consider Figure \ref{fig:three-vertices-more-triangles}, where we have labeled more vertices and regions than in the preceding figures. The vertex that plays the role of the vertex $v$ in the statement of the Proposition is here labeled $v_2$.
\begin{figure*}[ht]
\begin{center}
\begin{tikzpicture}[scale=1.5, every node/.style={font=\small}]
  \coordinate (v0) at (0,1.2);
  \coordinate (v1) at (-1,0);
  \coordinate (v2) at (-0.2,0);
  \coordinate (v3) at (1.1,0);

  \draw[thick] (1.8,0) -- (-1.8,0);

  \draw[thick] (v0) -- (v1);
  \draw[thick] (v0) -- (v2);
  \draw[thick] (v0) -- (v3);

  \draw[thick] (v1) -- ($(v0)!1.5!(v1)$);
  \draw[thick] (v3) -- ($(v0)!1.5!(v3)$);
  \draw[thick] (v2) -- ($(v0)!1.5!(v2)$);
  \draw[thick] (v0) -- ($(v1)!1.8!(v0)$);
  \draw[thick] (v0) -- ($(v2)!1.8!(v0)$);
  \draw[thick] (v0) -- ($(v3)!1.8!(v0)$);

  \filldraw[black] (v0) circle (1pt) node[left] {$v_0$};
  \filldraw[black] (v1) circle (1pt) node[below right] {$v_1$};
  \filldraw[black] (v2) circle (1pt) node[below right] {$v_2$};
  \filldraw[black] (v3) circle (1pt) node[below left] {$v_3$};


  \node at (-0.35,0.4) {\textcolor{black}{$T_1$}};
  \node at (0.25,0.4) {\textcolor{black}{$T_2$}};
  \node at (-0.7,-0.5) {\textcolor{black}{$T_3$}};
  \node at (0.5,-0.5) {\textcolor{black}{$T_4$}};
  \node at (-0.9,0.8) {\textcolor{black}{$T_5$}};
  \node at (0.8,0.8) {\textcolor{black}{$T_6$}};
  
  \node at (-0.15,1.8) {\textcolor{lightgray}{$T_4$}};
  \node at (0.3,1.8) {\textcolor{lightgray}{$T_3$}};
  \node at (-1.5,-0.25) {\textcolor{lightgray}{$T_6$}};
  \node at (1.7,-0.25) {\textcolor{lightgray}{$T_5$}};

\end{tikzpicture}
\end{center}
\caption{
The analog of Figure \ref{fig:3_line_arrangement_step1} but with a few more labeled triangular regions and vertices.}
\label{fig:three-vertices-more-triangles}
\end{figure*}

By assumption, $v_2$ is ordinary. Now, suppose $v_1$ is \textit{not} ordinary. Then there is an additional line through $v_1$ that passes through either the pair of triangular regions $T_1$ and $T_6$, or the pair of triangular regions $T_3$ and $T_5$. Observe that besides the vertex $v_1$, $T_1$ and $T_6$ share just one point in common: the vertex $v_0$. The same is also true for the triangular regions $T_3$ and $T_5$; they too just share the common points $v_0$ and $v_1$. If the additional line through $v_1$ passes into $T_1$ and $T_6$, then it splits each of $T_1$ and $T_6$ into two adjacent triangular regions. We can then use Corollary \ref{prop_adj-triangles} to conclude that there are ordinary points, one in $T_1$ and one in $T_6$, different from the vertices $v_0$ and $v_1$. If, instead, the additional line through $v_1$ passes through $T_3$ and $T_5$ then, by a symmetric argument, we would be able to find ordinary points in each of these triangular regions different from $v_0$ and $v_1$. In either case, together with $v_2$, this would give us our three ordinary points. Hence, the only way for there to be \textit{fewer} than three ordinary points is for $v_1$ to be ordinary. But then we can apply precisely the same argument to the vertex $v_3$ to conclude that it too must be ordinary. Therefore, in that case, $v_1, v_2$, and $v_3$ are our three ordinary points. 
\end{proof}

With this proposition in hand, we can now easily prove our main result:

\begin{theorem}\label{main-thm}
Given a finite arrangement of lines in $\mathbb{RP}^2$, not all passing through a common point, there must be at least three ordinary points.
\end {theorem}

\begin{proof}
Either there are three lines in the arrangement whose pairwise intersection points are ordinary, yielding three ordinary points, or by Proposition \ref{prop_special_ord_point}, there is a triangular region, $T$, with a fourth line passing through a defining vertex of $T$ into its interior and intersecting the opposite edge of $T$ at an ordinary point. Applying Proposition \ref{prop-final} in this case gives us our three ordinary points.
\end{proof}



\section{Conclusions.}\label{sec:conclusions}


We provided a very simple and constructive proof that in an arrangement of $n$ lines, not all passing through a common point, there must be at least three ordinary points. While not constructive, equation \ref{melch-y-new}, which comes out of Melchior's argument, gives rich information that the constructive proof does not give, namely that the number of ordinary points can be precisely captured if one has a detailed knowledge of the faces with more than three sides, and the vertices with more than three lines passing through them. These ``excesses'' are what give rise to all ordinary points beyond the magic number $3$.


We remark that our result immediately generalizes to finite arrangements of pseudolines. An \textit{arrangement of pseudolines} is analogous to an arrangement of lines, but instead of a set of lines, one has a set of simple, noncontractible closed curves (again in the real projective plane) such that every pair of them cross each other transversely in a single point (as long as these pseudolines are not all concurrent). Note that the argument for three ordinary points, including Propositions \ref{prop_special_ord_point} and \ref{prop-final}, and Corollary \ref{prop_adj-triangles}, use no fact about finite arrangements of lines other than that they are collections of closed curves and that every pair of them intersect in a single point. Therefore, Theorem \ref{main-thm} is true for finite arrangements of pseudolines as well. For more on pseudoline arrangements, see \cite{grunbaum72}.

Despite the groundbreaking result of Green and Tao \cite{gt13} establishing an $\frac{n}{2}$ lower bound on the number of ordinary points as a function of the number of lines for large enough $n$, an old conjecture attributed to Gabriel Dirac and Theodore Motzkin \cite{dirac51, motzkin51} that there can be no fewer than $\lfloor \frac{n}{2} \rfloor$ ordinary points for all $n$ still stands.  
Perhaps adaptations of Propositions \ref{prop_special_ord_point} and \ref{prop-final}, and Corollary \ref{prop_adj-triangles} enabling one to actually count ordinary points could be used to help make inroads into this conjecture.

\bibliographystyle{abbrvnat}
\bibliography{main}

\end{document}